\newtheorem{theorem}{Theorem}
\theoremstyle{plain}
\newtheorem{lemma}{Lemma}
\newtheorem{remark}{Remark}
\numberwithin{equation}{section}
\begin{document}
\title[Global attractors]{Global attractors for the one dimensional wave
equation with displacement dependent damping }
\author{A.Kh.Khanmamedov}
\address{{\small Department of Mathematics,} {\small Faculty of Science,
Hacettepe University, Beytepe 06800}, {\small Ankara, Turkey}}
\email{azer@hacettepe.edu.tr}
\date{}
\subjclass[2000]{ 35B41, \ 35L05}
\keywords{Attractors, wave equations}

\begin{abstract}
We study the long-time behavior of solutions of the one dimensional wave
equation with nonlinear damping coefficient. We prove that if the damping
coefficient function is strictly positive near the origin then this equation
possesses a global attractor.
\end{abstract}

\maketitle

\section{INTRODUCTION}

In this paper, we consider the following Cauchy problem:%
\begin{equation}
u_{tt}+\sigma (u)u_{t}-u_{xx}+\lambda u+f(u)=g(x),\text{ \ }(t,x)\in
(0,\infty )\times R\text{,}  \tag{1.1}
\end{equation}%
\begin{equation}
u(0,x)=u_{0}(x),\text{ \ \ \ \ \ \ \ \ }u_{t}(0,x)=u_{1}(x),\text{ \ \ \ \ \
\ \ \ \ \ \ \ \ \ \ \ \ \ }x\in R\text{,}  \tag{1.2}
\end{equation}%
where $\lambda $ is a positive constant, $g\in L_{1}(R)+L_{2}(R)$ and
nonlinear functions $f(\cdot )$ and $\sigma (\cdot )$ satisfy the following
conditions:%
\begin{equation}
f\in C^{1}(R)\text{, \ \ \ }f(u)u\geq 0\text{, \ \ }\nvdash \text{ }u\in R%
\text{, \ \ \ \ \ \ \ \ \ \ \ \ \ }  \tag{1.3}
\end{equation}%
\begin{equation}
\sigma \in C(R)\text{, \ \ }\sigma (0)>0\text{,\ \ \ \ }\sigma (u)\geq 0%
\text{,\ \ }\nvdash \text{ }u\in R\text{.\ \ }  \tag{1.4}
\end{equation}

Applying standard Galerkin's method and using techniques of \cite[%
Proposition 2.2]{6}, it is easy to prove the following existence and
uniqueness theorem:

\begin{theorem}
Assume that the conditions (1.3)-(1.4) hold. Then for any $T>0$ and $%
(u_{0},u_{1})\in \mathcal{H}:=H^{1}(R)\times L_{2}(R)$ the problem
(1.1)-(1.2) has a\ unique weak solution $u\in $ $C([0,T];H^{1}(R))\cap
C^{1}([0,T];L_{2}(R))\cap C^{2}([0,T];H^{-1}(R))$ on $[0,T]\times R$ such
that%
\begin{equation*}
\left\Vert (u(t),u_{t}(t))\right\Vert _{\mathcal{H}}\leq c(\left\Vert
(u_{0},u_{1})\right\Vert _{\mathcal{H}})\text{, \ }\nvdash t\geq 0\text{,}
\end{equation*}%
where $c:R_{+}\rightarrow R_{+}$ is a nondecreasing function. Moreover if $%
v\in $ $C([0,T];H^{1}(R))\cap C^{1}([0,T];L_{2}(R))\cap
C^{2}([0,T];H^{-1}(R))$ is also weak solution to (1.1)-(1.2) with initial
data $(v_{0},v_{1})\in \mathcal{H}$, then
\begin{equation*}
\left\Vert u(t)-v(t)\right\Vert _{L_{2}(R)}+\left\Vert
u_{t}(t)-v_{t}(t)\right\Vert _{H^{-1}(R)}\leq
\end{equation*}%
\begin{equation*}
\leq \widetilde{c}(T,\widetilde{R})\left( \left\Vert u_{0}-v_{0}\right\Vert
_{L_{2}(R)}+\left\Vert u_{1}-v_{1}\right\Vert _{H^{-1}(R)}\right) \text{, \ }%
\nvdash t\in \lbrack 0,T]\text{,}
\end{equation*}%
where $\widetilde{c}:R_{+}\times R_{+}\rightarrow R_{+}$ is a nondecreasing
function with respect to each variable and $\widetilde{R}=\max \left\{
\left\Vert (u_{0},u_{1})\right\Vert _{\mathcal{H}},\left\Vert
(v_{0},v_{1})\right\Vert _{\mathcal{H}}\right\} $.
\end{theorem}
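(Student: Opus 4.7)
The theorem splits naturally into existence, the a priori bound, and the continuous-dependence estimate (whence uniqueness), all of which I would handle along the lines of \cite[Prop.~2.2]{6}.

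For existence I would set up Galerkin approximations in a basis $\{w_j\}\subset H^1(R)$ (e.g.\ eigenfunctions of $I-\partial_x^2$), project (1.1)--(1.2) onto $V_N=\mathrm{span}\{w_1,\dots,w_N\}$ and solve the resulting ODE; only Peano's theorem is available because $f,\sigma$ are merely continuous. Testing the Galerkin equation with the time derivative of the approximation produces the energy identity
\begin{equation*}
\frac{d}{dt}\Big[\tfrac12\|u^N_t\|_{L_2}^2+\tfrac12\|u^N_x\|_{L_2}^2+\tfrac{\lambda}{2}\|u^N\|_{L_2}^2+\!\int_R F(u^N)\,dx\Big]+\int_R\sigma(u^N)(u^N_t)^2\,dx=\int_R g\,u^N_t\,dx,
\end{equation*}
with $F(s)=\int_0^s f(\tau)\,d\tau\geq 0$ by (1.3) and the dissipation nonnegative by (1.4). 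Splitting $g=g_1+g_2\in L_1+L_2$, writing $\int g u^N_t\,dx=\tfrac{d}{dt}\int g u^N\,dx$, and using $H^1(R)\hookrightarrow L^\infty(R)$ to estimate $|\int g_1 u^N|\leq\|g_1\|_{L_1}\|u^N\|_{L^\infty}$, Young's inequality absorbs the forcing into the energy and yields a uniform-in-$(N,t)$ bound $\|(u^N,u^N_t)\|_{\mathcal H}\leq c(\|(u_0,u_1)\|_{\mathcal H})$ with nondecreasing $c$; this extends the ODE globally and gives the stated a priori bound.

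To take $N\to\infty$ one uses weak-$*$ compactness in $L^\infty(0,T;\mathcal H)$ together with Aubin--Lions on each bounded interval $[-k,k]$ (since $u^N_t$ is bounded in $L^\infty(0,T;L_2)$), obtaining $u^N\to u$ in $C([0,T];L_2^{\mathrm{loc}})$ and a.e. The uniform $L^\infty_{t,x}$ bound from $H^1\hookrightarrow L^\infty$ passes continuously through $f$ and $\sigma$, so $f(u^N)\to f(u)$ and $\sigma(u^N)\to\sigma(u)$ strongly in $L^2_{\mathrm{loc}}$, which combined with the weak convergence of $u^N_t$ identifies the nonlinear terms in the limit. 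Regularity $u\in C([0,T];H^1)\cap C^1([0,T];L_2)$ is recovered a posteriori from continuity of the energy, and $u_{tt}\in C([0,T];H^{-1})$ from the equation.

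The main obstacle is the continuous-dependence inequality: the natural $H^1\times L_2$ energy cannot be closed when $\sigma$ is merely continuous, so I would shift to the weaker norm $L_2\times H^{-1}$. With $w=u-v$ satisfying
\begin{equation*}
w_{tt}-w_{xx}+\lambda w+\sigma(u)w_t+(\sigma(u)-\sigma(v))v_t+f(u)-f(v)=0,
\end{equation*}
I would test with $(I-\partial_x^2)^{-1}w_t$ and use self-adjointness of $(I-\partial_x^2)^{-1}$ to produce a differential inequality for $\Phi(t):=\|w(t)\|_{L_2}^2+\|w_t(t)\|_{H^{-1}}^2$. The a priori bound plus $H^1\hookrightarrow L^\infty$ gives a uniform $L^\infty_{t,x}$ bound on $u,v$ depending on $\widetilde R$, so $f\in C^1$ is Lipschitz on the relevant interval and $|f(u)-f(v)|\lesssim|w|$; the genuinely delicate term $(\sigma(u)-\sigma(v))v_t$, where only continuity of $\sigma$ is available, is controlled as in \cite[Prop.~2.2]{6} by approximating $\sigma$ uniformly on compacts by Lipschitz functions and using $v_t\in L^\infty(0,T;L_2)$. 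Gronwall applied to the resulting inequality $\tfrac{d}{dt}\Phi\leq C(T,\widetilde R)\Phi$ yields the stated estimate, and uniqueness is the special case $u_0=v_0,\ u_1=v_1$.
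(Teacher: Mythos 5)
Your existence and a priori bound part is fine and is exactly what the paper intends (the paper offers no written proof of Theorem~1.1; it only invokes ``standard Galerkin's method and the techniques of \cite[Proposition 2.2]{6}''). Two small remarks there: on the whole line $I-\partial_x^2$ has no discrete eigenbasis, so take any countable dense linearly independent system in $H^1(R)$ (or exhaust $R$ by bounded intervals); and to bound the initial energy you should note that (1.3) forces $f(0)=0$, whence $F(u_0)\leq C(\|u_0\|_{L_\infty})\,u_0^2\in L_1(R)$.

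The genuine gap is in the continuous-dependence step, precisely at the term you flag as delicate. Under (1.4) the function $\sigma$ is \emph{only continuous}, so $|\sigma(u)-\sigma(v)|$ is controlled only by a modulus of continuity $\omega(|w|)$ with possibly $\omega(\delta)/\delta\to\infty$; hence $\|(\sigma(u)-\sigma(v))v_t\|_{L_1}\leq\|\omega(|w|)\|_{L_2}\|v_t\|_{L_2}$ can never be dominated by $C\|w\|_{L_2}$, and your proposed repair does not close: if $\sigma_\epsilon$ is a Lipschitz approximant with constant $L_\epsilon$, Gronwall yields $\Phi(t)\leq e^{CL_\epsilon T}\bigl(\Phi(0)+C\epsilon\,T\bigr)$, and since $L_\epsilon\to\infty$ as $\epsilon\to0$ for a non-Lipschitz $\sigma$, you cannot remove the $\epsilon$-error without destroying the exponential factor — so you obtain neither the stated estimate nor even uniqueness. (The companion term $\langle\sigma(u)w_t,A^{-1}w_t\rangle$ is equally problematic: $A^{-1}$ does not commute with multiplication by $\sigma(u)$, and restoring its sign by integration by parts costs a factor $\sigma'(u)u_x$ that you do not have.) The actual technique of \cite[Proposition 2.2]{6}, which the paper is pointing to, is to \emph{never form the difference} $\sigma(u)-\sigma(v)$: write
\begin{equation*}
\sigma(u)u_t-\sigma(v)v_t=\partial_t\bigl[\Sigma(u)-\Sigma(v)\bigr]=\partial_t\bigl[\bar\sigma\,w\bigr],\qquad
\Sigma(s)=\int_0^s\sigma(\tau)\,d\tau,\quad \bar\sigma=\int_0^1\sigma(\theta u+(1-\theta)v)\,d\theta,
\end{equation*}
where $0\leq\bar\sigma\leq C(\widetilde R)$ by the uniform $L_\infty$ bounds on $u,v$; then test with $(I-\partial_x^2)^{-1}w_t$, integrate the damping term by parts in time, and substitute $w_{tt}$ from the equation, so that $\sigma$ is only ever evaluated and multiplied, never differenced or differentiated. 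With that replacement your functional $\Phi$ (suitably augmented by the cross terms produced by the time integration by parts) does satisfy a Gronwall inequality and the rest of your argument goes through.
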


Thus, by the formula $(u(t),u_{t}(t))=S(t)(u_{0},u_{1})$, the problem
(1.1)-(1.2) generates a weak continuous semigroup $\left\{ S(t)\right\}
_{t\geq 0}$ in $\mathcal{H}$, where $u(t,x)$ is a weak solution of
(1.1)-(1.2), determined by Theorem 1.1, with initial data $(u_{0},u_{1})$.

The attractors for equation (1.1) in the finite interval were studied in
\cite{2}, assuming positivity of $\sigma (\cdot )$. For two dimensional
case, the attractors for the wave equation with displacement dependent
damping were investigated in \cite{7} under conditions%
\begin{equation*}
\sigma \in C^{1}(R)\text{, }0<\sigma _{0}\leq \sigma (u)\leq c(1+\left\vert
u\right\vert ^{q})\text{, \ }\nvdash u\in R\text{, }0\leq q<\infty \text{,}
\end{equation*}%
and
\begin{equation}
|\sigma ^{\prime }(u)|\leq c[\sigma (u)]^{1-\varepsilon }\text{, \ }\nvdash
u\in R\text{, \ \ }0<\varepsilon <1\text{,}  \tag{1.5}
\end{equation}%
on the damping coefficient. Recently, in \cite{3}, condition (1.5) has been
improved as
\begin{equation*}
|\sigma ^{\prime }(u)|\leq c\sigma (u)\text{, }\nvdash u\in R\text{.}
\end{equation*}%
For the three dimensional bounded domain case, the existence of a global
attractor for the wave equation with displacement dependent damping was
proved in \cite{6} when $\sigma (\cdot )$ is a strictly positive and
globally bounded function. In this case, when $\sigma (\cdot )$ is\ not
globally bounded, but is equal to a positive constant in a large enough
interval, the existence of a global attractor has been established in \cite%
{4}.

In the articles mentioned above, the existence of global attractors was
proved under positivity or strict positivity condition on the damping
coefficient function $\sigma (\cdot )$. In this paper, we study a global
attractor for (1.1)-(1.2) under weaker conditions on $\sigma (\cdot )$ and
prove the following theorem:

\begin{theorem}
Under conditions (1.3)-(1.4) a semigroup $\left\{ S(t)\right\} _{t\geq 0}$
generated by (1.1)-(1.2) possesses a global attractor in $\mathcal{H}$.
\end{theorem}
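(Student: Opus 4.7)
The plan is to invoke the standard criterion for the existence of a global attractor: establish that the semigroup $\{S(t)\}_{t\geq 0}$ possesses a bounded absorbing set in $\mathcal{H}$ and is asymptotically compact on $\mathcal{H}$.

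\textbf{Step 1 (bounded absorbing set).} I would work with the perturbed energy
\[
V(t) \;=\; E(u(t),u_{t}(t)) \;+\; \varepsilon\!\int_{\mathbb{R}} u\,u_{t}\,dx \;+\; \varepsilon\!\int_{\mathbb{R}} \Sigma(u)\,dx,
\]
where $E=\tfrac{1}{2}\|u_{t}\|^{2}+\tfrac{1}{2}\|u_{x}\|^{2}+\tfrac{\lambda}{2}\|u\|^{2}+\int F(u)-\int g u$, with $F'=f$ and $\Sigma'=\sigma$, $\Sigma(0)=0$. Testing the equation against $u_{t}+\varepsilon u$ yields the basic energy identity $E'=-\int \sigma(u)u_{t}^{2}$ plus the $u$-identity coming from multiplication by $u$. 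The degeneracy of $\sigma$ forces us to split the region of integration: since $\sigma(0)>0$ and $\sigma$ is continuous, there is $\delta>0$ with $\sigma(u)\geq\sigma_{0}>0$ on $\{|u|\leq\delta\}$, which supplies the missing $u_{t}^{2}$ damping where $|u|$ is small; on $\{|u|>\delta\}$ the potential term $\lambda u^{2}$ provides the needed coercivity, and the sign condition $f(u)u\ge 0$ from (1.3) keeps the nonlinear contribution non-negative. For $\varepsilon>0$ small enough, I expect the resulting differential inequality $V'(t)\leq -cV(t)+C$ to give a bounded absorbing set.

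\textbf{Step 2 (asymptotic compactness).} Because the spatial domain is $\mathbb{R}$, Sobolev embeddings are not compact, so I would combine two ingredients. First, \emph{tail estimates}: using a smooth cutoff $\chi_{R}$ that vanishes on $[-R,R]$ and equals $1$ off $[-2R,2R]$ and testing against $\chi_{R}(u_{t}+\varepsilon u)$, I would show
\[
\sup_{t\geq T_{0}}\int_{|x|\geq 2R}\bigl(u_{t}^{2}+u_{x}^{2}+u^{2}\bigr)(t,x)\,dx \;\longrightarrow\; 0 \quad \text{as } R\to\infty,
\]
uniformly over initial data in the absorbing set; the hypothesis $g\in L_{1}(R)+L_{2}(R)$ ensures the forcing tail is controllable. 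Second, \emph{compactness on bounded intervals}: on $[-2R,2R]$ the embedding $H^{1}\hookrightarrow L^{2}$ is compact, and I would apply an energy (Ball-type) argument, decomposing the equation into a strongly dissipative linear part and a compact nonlinear perturbation. The uniqueness/continuous dependence bound provided by Theorem 1.1 in the $L^{2}\times H^{-1}$ norm is exactly what enables passage from weak to strong convergence of trajectories.

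\textbf{Main obstacle.} The principal difficulty is extracting strong compactness in the presence of the degenerate damping. Standard treatments such as \cite{6} rely on $\sigma\ge\sigma_{0}>0$ to bound $\|u_{t}\|_{L^{2}}$ away from zero in a controlled manner; this is unavailable here. I expect the workaround to rest on a compactness--uniqueness scheme: given sequences $t_{n}\to\infty$ and $(u_{0}^{n},u_{1}^{n})$ in the absorbing set, one extracts a limit solution on $\mathbb{R}\times\mathbb{R}$, uses the dissipation integral $\int_{0}^{\infty}\!\int\sigma(u)u_{t}^{2}\,dx\,dt<\infty$ to show $\sigma(u)u_{t}\equiv 0$ on the limit, and then exploits $\sigma(0)>0$ together with the uniqueness statement of Theorem 1.1 to conclude the required strong convergence. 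Once asymptotic compactness is in hand, the existence of the global attractor in $\mathcal{H}$ follows from the classical criterion.
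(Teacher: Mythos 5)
There is a genuine gap, and it sits exactly at the point where the degeneracy of $\sigma$ bites. In Step 1 your differential inequality $V'(t)\leq -cV(t)+C$ cannot be closed: the multiplier $\varepsilon u$ produces the positive term $\varepsilon\left\Vert u_{t}\right\Vert _{L_{2}(R)}^{2}$, which must be absorbed by the dissipation $\int_{R}\sigma (u)u_{t}^{2}\,dx$; but under (1.4) alone $\sigma(u)$ may vanish identically on the (possibly large) set where $\left\vert u\right\vert$ is not small, and there the term $\lambda u^{2}$ supplies no control of the kinetic energy whatsoever --- it is part of the energy being dissipated, not a source of dissipation. Your proposed splitting handles only the zeroth-order term, not $u_{t}^{2}$. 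The paper does not prove a bounded absorbing set at all. Instead, its key technical result (Lemma 2.1) is a uniform spatial tail estimate obtained by a bootstrap with the multipliers $\eta_{r}^{2}\Sigma(u)$ and $\eta_{r}^{2}(u_{t}+\mu u)$, iterated over time steps of fixed length; this shows that for large times the solution is uniformly small outside a bounded interval, whence $\sigma(u(t,x))\geq \sigma(0)/2$ for $\left\vert x\right\vert\geq \widehat r_{0}$, $t\geq\widehat T_{0}$. The equation is thus reduced to one with damping effective outside a compact set, asymptotic compactness follows by the techniques of \cite{5}, and the attractor is then obtained from the gradient-system criterion of \cite[Corollary 2.29]{1} (strict Lyapunov function plus asymptotic smoothness plus a bounded set of equilibria), which requires no absorbing set. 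Note also that your own tail-estimate sketch (testing against $\chi_{R}(u_{t}+\varepsilon u)$) runs into the same obstruction --- absorbing $\varepsilon\int \chi_{R}^{2}u_{t}^{2}$ requires $\sigma(u)$ bounded below on the support of $\chi_{R}$, which is what one is trying to prove; resolving this circularity is the real content of Lemma 2.1.

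The second gap is in your compactness--uniqueness scheme: from $\int_{0}^{\infty}\int_{R}\sigma(u)u_{t}^{2}\,dx\,dt<\infty$ one deduces only that $u_{t}=0$ on the set where $\sigma(u)>0$ along the limit trajectory. To conclude that the trajectory is stationary everywhere you need a unique continuation theorem for the wave equation with a potential (the paper invokes \cite{8}), combined with the lower bound (2.10) which guarantees the damping region contains $\left\{\left\vert x\right\vert\geq\widehat r_{0}\right\}$. The uniqueness clause of Theorem 1.1 is Cauchy-data uniqueness for the initial value problem and cannot substitute for unique continuation from a subregion. This is precisely where the paper uses the strictness of the Lyapunov function, and it is the step your outline leaves unsupported.
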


\section{Proof of Theorem 1.2}

To prove this theorem we need the following lemma:

\begin{lemma}
Let conditions (1.3)-(1.4) hold and let $B$ be a bounded subset of $\mathcal{%
H}$. Then for any $\varepsilon >0$ there exist $T_{0}=T_{0}(\varepsilon
,B)>0 $ and $r_{0}=r_{0}(\varepsilon ,B)>0$ such that
\begin{equation}
\left\Vert S(t)\varphi \right\Vert _{H^{1}(R\backslash (-r_{0},r_{0}))\times
L_{2}(R\backslash (-r_{0},r_{0}))}<\varepsilon ,\text{ }\nvdash t\geq T_{0},%
\text{ \ }\nvdash \varphi \in B.  \tag{2.1}
\end{equation}
\end{lemma}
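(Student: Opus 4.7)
The plan is to implement a cutoff-multiplier argument at the level of the tail energy on $|x|>r$, combined with a Lyapunov functional that produces exponential-in-time decay up to an $O(1/r)$ remainder. Fix a nondecreasing $\eta\in C^{\infty}(\mathbb{R})$ with $\eta\equiv 0$ on $[0,1]$ and $\eta\equiv 1$ on $[2,\infty)$, and for $r>0$ set $\eta_{r}(x)=\eta(|x|/r)$; thus $\eta_{r}$ vanishes on $(-r,r)$, equals $1$ for $|x|\ge 2r$, and $\|\eta_{r}'\|_{\infty}\le C/r$. All constants below may depend on $B$ through the uniform-in-time bound $\|(u,u_{t})\|_{\mathcal{H}}\le C(B)$ furnished by Theorem~1.1, where $u=S(t)\varphi$.

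The first step is to test (1.1) against $\eta_{r}u_{t}$ and integrate by parts in $x$. With $F(u)=\int_{0}^{u}f(s)\,ds$ and
\[
E_{r}(t)=\int_{\mathbb{R}}\eta_{r}\Bigl[\tfrac12(u_{t}^{2}+u_{x}^{2}+\lambda u^{2})+F(u)-gu\Bigr]dx,
\]
this produces the tail-energy identity
\[
\frac{d}{dt}E_{r}(t)+\int_{\mathbb{R}}\sigma(u)u_{t}^{2}\eta_{r}\,dx=-\int_{\mathbb{R}}\eta_{r}'\,u_{x}u_{t}\,dx,
\]
whose right-hand side is bounded by $C(B)/r$ uniformly in $t\ge 0$. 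Writing $g=g_{1}+g_{2}\in L^{1}+L^{2}$, the contribution $\int\eta_{r}gu\,dx$ is dominated by a quantity $\kappa(r)$ that tends to $0$ as $r\to\infty$, uniformly in $\varphi\in B$; combined with $F\ge 0$ coming from (1.3), $E_{r}$ is then equivalent to the positive tail functional $\int\eta_{r}(u_{t}^{2}+u_{x}^{2}+u^{2}+F(u))\,dx$ up to this $\kappa(r)$-error.

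The second step is the equipartition multiplier $\eta_{r}u$. Adding a small multiple $\delta>0$ of the resulting identity to the first, I would form the Lyapunov functional
\[
\mathcal{L}_{r}(t)=E_{r}(t)+\delta\int_{\mathbb{R}}\eta_{r}\,uu_{t}\,dx,
\]
which is equivalent to $E_{r}$ for $\delta$ small. The target is the differential inequality
\[
\frac{d}{dt}\mathcal{L}_{r}(t)+c_{0}\,\mathcal{L}_{r}(t)\le\frac{C(B)}{r}+\kappa(r),
\]
for an absolute constant $c_{0}>0$ depending only on $\lambda$ and $\sigma(0)$. Gronwall then yields $\mathcal{L}_{r}(t)\le C(B)e^{-c_{0}t}+c_{0}^{-1}(C(B)/r+\kappa(r))$, and (2.1) follows by choosing $r_{0}$ so large that the second summand is $<\varepsilon^{2}/4$ and then $T_{0}$ so large that $C(B)e^{-c_{0}T_{0}}<\varepsilon^{2}/4$.

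The hard part will be justifying the absorption needed in the second step, because $\sigma$ is only assumed nonnegative with $\sigma(0)>0$, not uniformly positive. Specifically, differentiating $\int\eta_{r}uu_{t}\,dx$ produces a term $\delta\int\eta_{r}u_{t}^{2}\,dx$ that cannot be absorbed pointwise into the dissipation $\int\sigma(u)u_{t}^{2}\eta_{r}\,dx$. To handle it I would split the spatial integration into the good set $\{|u(t,x)|\le\delta_{0}\}$ and its complement, with $\delta_{0}>0$ chosen by continuity of $\sigma$ and (1.4) so that $\sigma\ge\sigma(0)/2$ on the good set; absorption is immediate there. On the complement one would exploit the one-dimensional embedding $H^{1}(\mathbb{R})\hookrightarrow C_{0}(\mathbb{R})$ together with the global dissipation bound $\int_{0}^{\infty}\int_{\mathbb{R}}\sigma(u)u_{t}^{2}\,dx\,dt\le C(B)$, obtained by integrating the global energy identity $\dot E(t)+\int\sigma(u)u_{t}^{2}\,dx=0$ in time, to treat the bad-set contribution as a small remainder, possibly at the cost of an additional bootstrapping step or a passage to a subsequence of times. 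Making this estimate uniform in $t\ge 0$ and $\varphi\in B$, and thereby genuinely leveraging the weak positivity $\sigma(0)>0$ rather than a global lower bound $\sigma\ge\sigma_{0}$, is the technical heart of the proof.
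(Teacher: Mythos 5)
There is a genuine gap, and you have in fact located it yourself: everything hinges on handling the region where $|u|$ is not small, and the tools you propose for that region do not work. The global dissipation bound $\int_{0}^{\infty}\int_{R}\sigma(u)u_{t}^{2}\,dx\,dt\le C(B)$ gives no information on the set $\{|u|>\delta_{0}\}$, since (1.4) allows $\sigma$ to vanish identically there; and the embedding $H^{1}(R)\hookrightarrow C_{0}(R)$ gives decay of $u(t,\cdot)$ at spatial infinity only for each fixed $t$, with a rate that is not uniform in $t$ (translating bumps have uniformly bounded $H^{1}$ norm but do not decay uniformly), so invoking it to make the bad set negligible in the tail is circular: uniform-in-time spatial decay of $u$ is essentially what Lemma 1 asserts. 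Consequently the differential inequality $\frac{d}{dt}\mathcal{L}_{r}(t)+c_{0}\mathcal{L}_{r}(t)\le C(B)/r+\kappa(r)$ cannot be established for all $t\ge 0$, and the single Gronwall step you envisage does not close.

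The missing idea, which is the heart of the paper's argument, is the additional multiplier $\eta_{r}^{2}\Sigma(u)$ with $\Sigma(u)=\int_{0}^{u}\sigma(s)\,ds$. Testing (1.1) against it and using the basic energy bound yields $\int_{0}^{t}\int_{R}\eta_{r}^{2}\,\Sigma(u)u\,dx\,d\tau\le c\bigl(1+\sqrt{t}+t/r+t\|g\|_{L_{1}+L_{2}(\mathrm{tail})}\bigr)$, and since $\sigma\ge\sigma(0)/2$ on $[-l,l]$ and $\sigma\ge 0$ everywhere one has $\Sigma(u)u\ge\tfrac{\sigma(0)}{2}|u|\min(|u|,l)$; this is precisely the device that converts the weak hypothesis $\sigma(0)>0$ into integrated-in-time smallness of $\|\eta_{r}u(\tau)\|_{L_{2}}$ and hence of $\|\eta_{r}u(\tau)\|_{L_{\infty}}$. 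A pigeonhole in time then produces, within a bounded window, times $t^{\ast}$ at which the tail functional is small; because $\|u_{t}\|_{L_{2}}$ is uniformly bounded, the bound $\|u(t)\|_{L_{\infty}(\mathrm{tail})}<l$ persists on an interval of fixed length after $t^{\ast}$, and only on such intervals does $\sigma(u)\ge\sigma(0)/2$ hold in the tail so that your Lyapunov--Gronwall step (the paper's multiplier $\eta_{r}^{2}(u_{t}+\mu u)$) applies. One must then iterate finitely many times, enlarging $r$ at each step, and finally upgrade the conclusion from one time to all $t\ge T_{0}$ via the positive invariance of $\cup_{t\ge 0}S(t)B$. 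Without the $\Sigma(u)$ multiplier, or an equivalent mechanism producing quantitative tail smallness of $u$ itself, the absorption in your second step cannot be justified.
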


\begin{proof}
Let $(u_{0},u_{1})\in B$ and $S(t)(u_{0},u_{1})=(u(t),u_{t}(t))$.
Multiplying (1.1) by $u_{t}$\bigskip\ and integrating over $(0,t)\times R$
we obtain%
\begin{equation}
\left\Vert u_{t}(t)\right\Vert _{L_{2}(R)}^{2}+\left\Vert u(t)\right\Vert
_{H^{1}(R)}^{2}+\underset{0}{\overset{t}{\int }}\underset{R}{\int }\sigma
(u(\tau ,x))u_{t}^{2}(\tau ,x)dxd\tau \leq c_{1},\text{ \ \ \ }\nvdash t\geq
0.  \tag{2.2}
\end{equation}%
Let $\eta \in C^{1}(R)$, $0\leq \eta (x)\leq 1$, $\eta (x)=\left\{
\begin{array}{c}
0,\text{ \ \ }\left\vert x\right\vert \leq 1 \\
1,\text{ \ }\left\vert x\right\vert \geq 2\text{\ }%
\end{array}%
\right. $, $\eta _{r}(x)=\eta (\frac{x}{r})$ and $\Sigma (u)=\underset{0}{%
\overset{u}{\int }}\sigma (s)ds$. Multiplying (1.1) by $\eta _{r}^{2}\Sigma
(u)$, integrating over $(0,t)\times R$ and taking into account (2.2) we have%
\begin{equation*}
\underset{0}{\overset{t}{\int }}\underset{R}{\int }\eta _{r}^{2}(x)\sigma
(u(\tau ,x))u_{x}^{2}(\tau ,x)dxd\tau +\lambda \underset{0}{\overset{t}{\int
}}\underset{R}{\int }\eta _{r}^{2}(x)\Sigma (u(\tau ,x))u(\tau ,x)dxd\tau
\leq
\end{equation*}%
\begin{equation}
\leq c_{2}(1+\sqrt{t}+\frac{t}{r}+t\left\Vert g\right\Vert
_{L_{1}(R\backslash (-r,r))+L_{2}(R\backslash (-r,r))}),\text{ \ \ \ }%
\nvdash t\geq 0,\text{ \ \ \ }\nvdash r>0.  \tag{2.3}
\end{equation}%
By (1.4), there exists $l>0$, such that
\begin{equation}
\frac{\sigma (0)}{2}\leq \sigma (s)\leq 2\sigma (0),\text{ \ \ \ \ }\nvdash
s\in \lbrack -l,l].  \tag{2.4}
\end{equation}%
Using embedding $H^{\frac{1}{2}+\varepsilon }(R)\subset L_{\infty }(R)$ and
taking into account (2.2) and (2.4) we find
\begin{equation*}
\underset{0}{\overset{t}{\int }}\underset{R}{\int }\eta
_{r}^{2}(x)u^{2}(\tau ,x)dxd\tau \leq \frac{2}{\sigma (0)}\underset{0}{%
\overset{t}{\int }}\underset{\{x:\left\vert u(\tau ,x)\right\vert \leq l\}}{%
\int }\eta _{r}^{2}(x)\Sigma (u(\tau ,x))u(\tau ,x)dxd\tau +
\end{equation*}%
\begin{equation*}
+c_{3}\underset{0}{\overset{t}{\int }}\underset{\{x:\left\vert u(\tau
,x)\right\vert >l\}}{\int }\eta _{r}^{2}(x)\left\vert u(\tau ,x)\right\vert
dxd\tau \leq \frac{2}{\sigma (0)}\underset{0}{\overset{t}{\int }}\underset{%
\{x:\left\vert u(\tau ,x)\right\vert \leq l\}}{\int }\eta _{r}^{2}(x)\Sigma
(u(\tau ,x))u(\tau ,x)dxd\tau +
\end{equation*}%
\begin{equation*}
+\frac{2c_{3}}{\sigma (0)l}\underset{0}{\overset{t}{\int }}\underset{%
\{x:\left\vert u(\tau ,x)\right\vert >l\}}{\int }\eta _{r}^{2}(x)\Sigma
(u(\tau ,x))u(\tau ,x)dxd\tau
\end{equation*}%
and consequently%
\begin{equation}
\underset{0}{\overset{t}{\int }}\left\Vert \eta _{r}u(\tau )\right\Vert
_{L_{\infty }(R)}^{5}d\tau \leq c_{4}\underset{0}{\overset{t}{\int }}%
\left\Vert \eta _{r}u(\tau )\right\Vert _{L_{2}(R)}^{2}d\tau \leq c_{5}%
\underset{0}{\overset{t}{\int }}\underset{R}{\int }\eta _{r}^{2}(x)\Sigma
(u(\tau ,x))u(\tau ,x)dxd\tau ,\text{ \ }  \tag{2.5}
\end{equation}%
for $r\geq 1$. So by (2.2), (2.3) and (2.5), we get
\begin{equation*}
\underset{0}{\overset{t}{\int }}\left[ \left\Vert \eta _{2r}\sigma ^{\frac{1%
}{2}}(u(\tau ))u_{t}(\tau )\right\Vert _{L_{2}(R)}^{2}+\left\Vert \eta
_{2r}\sigma ^{\frac{1}{2}}(u(\tau ))u_{x}(\tau )\right\Vert
_{L_{2}(R)}^{2}+\lambda \left\Vert \eta _{2r}\sigma ^{\frac{1}{2}}(u(\tau
))u(\tau )\right\Vert _{L_{2}(R)}^{2}\right. +
\end{equation*}%
\begin{equation}
+\left. \left\Vert \eta _{r}u(\tau )\right\Vert _{L_{\infty }(R)}^{5}\right]
d\tau \leq c_{6}(1+\sqrt{t}+\frac{t}{r}+t\left\Vert g\right\Vert
_{L_{1}(R\backslash (-r,r))+L_{2}(R\backslash (-r,r))}),\text{ \ }\nvdash
t\geq 0,\text{ }\nvdash r\geq 1.  \tag{2.6}
\end{equation}%
Now denote $\Phi _{r}(u(t)):=\frac{1}{2}\left\Vert \eta
_{r}u_{t}(t)\right\Vert _{L_{2}(R)}^{2}+\frac{1}{2}\left\Vert \eta
_{r}u_{x}(t)\right\Vert _{L_{2}(R)}^{2}+\mu \left\langle \eta _{r}u_{t}(t),%
\text{ }\eta _{r}u(t)\right\rangle +\frac{\lambda }{2}\left\Vert \eta
_{r}u(t)\right\Vert _{L_{2}(R)}^{2}+\left\langle \eta _{r}F(u(t)),\text{ }%
\eta _{r}\right\rangle +\left\langle \eta _{r}g,\text{ }\eta
_{r}u(t)\right\rangle $, where $\mu =\min \left\{ \sqrt{\frac{\lambda }{2}},%
\frac{\sigma (0)}{5},\frac{\lambda }{2\sigma (0)}\right\} ,$ $\left\langle u,%
\text{ }v\right\rangle =\underset{R}{\int }u(x)v(x)dx$ and $F(u)=\underset{0}%
{\overset{u}{\int }}f(s)ds.$\bigskip\ By (2.4) and (2.6), it follows that
for any $\delta >0$ there exist $T_{\delta }=T_{\delta }(B)>0$, $r_{1,\delta
}=r_{1,\delta }(B)>1$ and for any $r\geq r_{1,\delta }$ there exists $%
t_{\delta ,r}^{\ast }\in \lbrack 0,T_{\delta }]$ such that%
\begin{equation}
\Phi _{r}(u(t_{\delta ,r}^{\ast }))<\delta ,\text{ \ \ }\nvdash r\geq
r_{1,\delta }.  \tag{2.7}
\end{equation}%
Again by (2.2), we have%
\begin{equation*}
\left\Vert \eta _{r}u(t)\right\Vert _{L_{2}(R)}\leq \left\Vert \eta
_{r}u(t_{\delta ,r}^{\ast })\right\Vert _{L_{2}(R)}+\underset{t_{\delta
,r}^{\ast }}{\overset{t}{\int }}\left\Vert \eta _{r}u_{t}(s)\right\Vert
_{L_{2}(R)}ds\leq \left\Vert \eta _{r}u(t_{\delta ,r}^{\ast })\right\Vert
_{L_{2}(R)}+c_{7}(t-t_{\delta ,r}^{\ast })
\end{equation*}%
and consequently%
\begin{equation*}
\left\Vert \eta _{_{r}}u(t)\right\Vert _{L_{\infty }(R)}^{3}\leq
c_{8}\left\Vert \eta _{_{r}}u(t)\right\Vert _{L_{2}(R)}\leq c_{9}(\Phi _{r}^{%
\frac{1}{2}}(u(t_{\delta ,r}^{\ast }))+\left\Vert g\right\Vert
_{L_{1}(R\backslash (-r,r))+L_{2}(R\backslash (-r,r))}^{\frac{1}{2}%
}+t-t_{\delta ,r}^{\ast })<
\end{equation*}%
\begin{equation*}
<c_{9}(\delta ^{\frac{1}{2}}+\left\Vert g\right\Vert _{L_{1}(R\backslash
(-r,r))+L_{2}(R\backslash (-r,r))}^{\frac{1}{2}}+t-t_{\delta ,r}^{\ast }),%
\text{ }\nvdash t\geq t_{\delta ,r}^{\ast },\text{ }\nvdash r\geq
r_{1,\delta }.
\end{equation*}%
Denoting $T_{\delta ,r}^{\ast }=t_{\delta ,r}^{\ast }+\frac{l^{3}}{3c_{9}}$
and choosing $\delta \in (0,\frac{l^{6}}{9c_{9}^{2}})$, by the last
inequality, we can say that there exists $r_{2,\delta }\geq 2r_{1,\delta }$
such that%
\begin{equation}
\left\Vert u(t)\right\Vert _{L_{\infty }(R\backslash (-r_{2,\delta
},r_{2,\delta }))}<l,\text{ \ \ }\nvdash t\in \lbrack t_{\delta ,r}^{\ast
},T_{\delta ,r}^{\ast }].\text{ \ }  \tag{2.8}
\end{equation}%
Now multiplying (1.1) by $\eta _{r}^{2}(u_{t}+\mu u)$, integrating over $R$
and taking into account (2.4) and (2.8) we obtain%
\begin{equation*}
\frac{d}{dt}\Phi _{r}(u(t))+c_{10}\Phi _{r}(u(t))\leq c_{11}(\frac{1}{r}%
+\left\Vert g\right\Vert _{L_{1}(R\backslash (-r,r))+L_{2}(R\backslash
(-r,r))}),\text{ }\nvdash t\in \lbrack t_{\delta ,r}^{\ast },T_{\delta
,r}^{\ast }],
\end{equation*}%
and consequently
\begin{equation}
\Phi _{r}(u(t))\leq \Phi _{r}(u(t_{\delta ,r}^{\ast
}))e^{-c_{10}(t-t_{\delta ,r}^{\ast })}+c_{11}(\frac{1}{r}+\left\Vert
g\right\Vert _{L_{1}(R\backslash (-r,r))+L_{2}(R\backslash (-r,r))})\frac{%
1-e^{-c_{10}(t-t_{\delta ,r}^{\ast })}}{c_{10}},  \tag{2.9}
\end{equation}%
for $r\geq r_{2,\delta }$ . By (2.7) and (2.9), there exists $r_{3,\delta
}\geq r_{2,\delta }$ such that
\begin{equation*}
\Phi _{r}(u(t))<\delta ,\text{ \ \ }\nvdash r\geq r_{3,\delta },\text{ }%
\nvdash t\in \lbrack t_{\delta ,r}^{\ast },T_{\delta ,r}^{\ast }].
\end{equation*}%
Hence denoting by $n_{\delta }$ the smallest integer number which is not
less than $\frac{3c_{9}T_{\delta }}{l^{3}}$ and applying above procedure at
most $n_{\delta }$ time, we find
\begin{equation*}
\Phi _{r}(u(T_{\delta }))<\delta ,\text{ \ \ \ }\nvdash r\geq r_{4,\delta },
\end{equation*}%
for some $r_{4,\delta }\geq 2^{n_{\delta }}r_{1,\delta }$. From the last
inequality it follows that for any $\varepsilon >0$ there exist $\widehat{T}%
_{\varepsilon }=\widehat{T}_{\varepsilon }(B)>0$ and $\widehat{r}%
_{\varepsilon }=\widehat{r}_{\varepsilon }(B)>0$ such that
\begin{equation*}
\left\Vert S(\widehat{T}_{\varepsilon })\varphi \right\Vert
_{H^{1}(R\backslash (-\widehat{r}_{\varepsilon },\widehat{r}_{\varepsilon
}))\times L_{2}(R\backslash (-\widehat{r}_{\varepsilon },\widehat{r}%
_{\varepsilon }))}<\varepsilon ,\text{ \ }\nvdash \varphi \in B.
\end{equation*}%
Since, by (2.2), $B_{0}=\underset{t\geq 0}{\cup }S(t)B$ is a bounded subset
of $\mathcal{H}$, for any $\varepsilon >0$ there exist $T_{0}=T_{0}(%
\varepsilon ,B)>0$ and $r_{0}=r_{0}(\varepsilon ,B)>0$ such that%
\begin{equation*}
\left\Vert S(T_{0})\varphi \right\Vert _{H^{1}(R\backslash
(-r_{0},r_{0}))\times L_{2}(R\backslash (-r_{0},r_{0}))}<\varepsilon ,\text{
\ }\nvdash \varphi \in B_{0}.
\end{equation*}%
Taking into account positively invariance of $B_{0}$, from the last
inequality we obtain (2.1).
\end{proof}

By (2.1) and (2.4), for any bounded subset $B$ of $\mathcal{H}$ there exist $%
\widehat{T}_{0}=\widehat{T}_{0}(B)>0$ and $\widehat{r}_{0}=\widehat{r}%
_{0}(B)>0$ such that%
\begin{equation}
\sigma (u(t,x))\geq \frac{\sigma (0)}{2},\text{ \ \ \ \ }\nvdash t\geq
\widehat{T}_{0},\text{ \ }\nvdash \left\vert x\right\vert \geq \widehat{r}%
_{0}.  \tag{2.10}
\end{equation}%
Hence using techniques of \cite{5} one can prove the asymptotic compactness
of the semigroup $\left\{ S(t)\right\} _{t\geq 0}$, which is included in the
following lemma:

\begin{lemma}
Assume that conditions (1.3)-(1.4) hold and $B$ is bounded subset of $%
\mathcal{H}$. Then every sequence of the form $\left\{ S(t_{n})\varphi
_{n}\right\} _{n=1}^{\infty }$, $\left\{ \varphi _{n}\right\} _{n=1}^{\infty
}\subset B$, $t_{n}\rightarrow \infty $, has a convergent subsequence in $%
\mathcal{H}$.
\end{lemma}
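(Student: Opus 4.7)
The plan is to upgrade the trivial weak compactness of the bounded sequence $\{S(t_n)\varphi_n\}\subset\mathcal{H}$ to norm compactness by combining two ingredients already in hand: the uniform tail estimate (2.1), which localises the solutions onto a fixed bounded interval up to arbitrarily small error, and the exterior damping bound (2.10), which restores coercive dissipation outside a large ball so that an energy-method argument in the spirit of \cite{5} can be run on the interior.

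\textbf{Weak limit and spatial reduction.} By (2.2), $\{S(t_n)\varphi_n\}$ is bounded in $\mathcal{H}$, so along a subsequence it converges weakly to some $\psi^{\ast}\in\mathcal{H}$. Given $\varepsilon>0$, Lemma~1 furnishes $r_0$, $T_0$ such that the $H^1(R\setminus(-r_0,r_0))\times L_2(R\setminus(-r_0,r_0))$ norm of $S(t_n)\varphi_n$ is less than $\varepsilon$ for every $n$ with $t_n\geq T_0$; by weak lower-semicontinuity the same bound holds for $\psi^{\ast}$. It therefore suffices to establish strong convergence of $S(t_n)\varphi_n$ to $\psi^{\ast}$ in $H^1((-r_0,r_0))\times L_2((-r_0,r_0))$.

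\textbf{Backward shift and energy identity for the difference.} Fix $T>\widehat{T}_0(B)$ and write $S(t_n)\varphi_n=S(T)\chi_n$ with $\chi_n:=S(t_n-T)\varphi_n$. Extracting a further subsequence, $\chi_n\rightharpoonup\chi^{\ast}$ weakly in $\mathcal{H}$; the Lipschitz estimate of Theorem~1.1 in $L_2\times H^{-1}$ then identifies $S(T)\chi^{\ast}=\psi^{\ast}$. Let $u_n$, $u^{\ast}$ be the solutions on $[0,T]$ starting from $\chi_n$, $\chi^{\ast}$. The difference $w_n=u_n-u^{\ast}$ satisfies
\[w_{n,tt}+\sigma(u_n)\,w_{n,t}-w_{n,xx}+\lambda w_n=-(\sigma(u_n)-\sigma(u^{\ast}))\,u^{\ast}_t-(f(u_n)-f(u^{\ast})).\]
Multiplying by $w_{n,t}+\mu w_n$ cut off by $\eta_{r_0}^2$ and by $1-\eta_{r_0/2}^2$ separately, the dissipation $\int\sigma(u_n)w_{n,t}^2\,dx$ together with (2.10) provides coercive control on the exterior, while on the interior $(-r_0,r_0)$ the compact embedding $H^1\hookrightarrow L_2$ yields strong convergence $u_n\to u^{\ast}$ in $C([0,T];L_2((-r_0,r_0)))$, which sends both source terms on the right-hand side to zero in the limit. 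A Gronwall-type argument then forces $w_n(T)\to 0$ in $H^1((-r_0,r_0))\times L_2((-r_0,r_0))$.

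\textbf{Main obstacle.} The delicate step is the interior energy estimate, because $\sigma(u_n)$ is \emph{not} uniformly bounded below: whenever $|u_n|$ leaves the window $[-l,l]$ of (2.4), the cross term $(\sigma(u_n)-\sigma(u^{\ast}))u^{\ast}_t$ is not directly controlled by $L_{2,\,\mathrm{loc}}$ convergence alone. Exploiting the a priori bound $\iint\sigma(u_n)u_{n,t}^2\,dx\,d\tau\le c_1$ from (2.2), and the techniques of \cite{5} to absorb this term into the dissipation, is where the real work lies; (2.1) and (2.10) together are precisely what makes that scheme, originally designed for bounded domains, applicable here on the whole line.
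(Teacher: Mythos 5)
Your overall scheme --- reduce to a bounded interval via the tail estimate (2.1), use the restored damping (2.10) outside a large ball, and run an energy argument in the spirit of \cite{5} --- is exactly the route the paper indicates; the paper itself gives no more detail than a citation of \cite{5} together with the ingredients (2.1) and (2.10). Your weak-limit identification $S(T)\chi^{\ast}=\psi^{\ast}$ and the interior strong convergence $u_n\to u^{\ast}$ in $C([0,T];L_2((-r_0,r_0)))$ via Arzel\`a--Ascoli are both sound.

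The gap is in the step you describe as ``a Gronwall-type argument then forces $w_n(T)\to 0$.'' The initial datum of the difference, $w_n(0)=\chi_n-\chi^{\ast}$, converges to zero only weakly in $\mathcal{H}$, so $\left\Vert w_n(0)\right\Vert_{\mathcal{H}}$ is not small; a differential inequality of the form $\frac{d}{dt}E_n\leq -c\,E_n+\epsilon_n$ with $\epsilon_n\to 0$ could still close the argument, but only as $T\to\infty$ along a diagonal subsequence (with a fixed finite $T$ it cannot close at all), and more importantly the $-c\,E_n$ term is not available: the multiplier $w_{n,t}+\mu w_n$ produces a positive contribution $\mu\left\Vert w_{n,t}\right\Vert_{L_2}^{2}$ that must be absorbed into the dissipation $\int\sigma(u_n)w_{n,t}^{2}\,dx$, and this requires a lower bound $\sigma(u_n)\geq\mu$ precisely on the interior region $(-\widehat{r}_0,\widehat{r}_0)$ where (2.10) gives nothing. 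This undamped-interior difficulty is the actual content of ``the techniques of \cite{5}'': one must combine the integrated dissipation from (2.2) with a unique-continuation or observability step (in one space dimension one may also invoke finite propagation speed: every characteristic leaves $(-\widehat{r}_0,\widehat{r}_0)$ within time $2\widehat{r}_0$ and enters the region where $\sigma\geq\sigma(0)/2$), or use the energy-equation/contractive-function method. By contrast, the term you single out as the main obstacle, $(\sigma(u_n)-\sigma(u^{\ast}))u^{\ast}_t$, is benign: by (2.2) and the embedding $H^{1}(R)\subset L_{\infty}(R)$ the functions $u_n$ are uniformly bounded, so $\sigma(u_n)-\sigma(u^{\ast})$ is uniformly bounded and tends to zero in measure on $(0,T)\times(-r_0,r_0)$, whence the product tends to zero in $L_2$ by dominated convergence; no lower bound on $\sigma$ is needed there.
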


By (2.10) and the unique continuation result of \cite{8}, it is easy to see
that problem (1.1)-(1.2) has a strict Lyapunov function (see \cite{1} for
definition). Thus according to \cite[Corollary 2.29]{1} the semigroup $%
\left\{ S(t)\right\} _{t\geq 0}$ possesses a global attractor.

\begin{remark}
We note that, for the problem considered in \cite{2}, from compact embedding
$H_{0}^{1}(0,\pi )\subset C[0,\pi ]$, it immediately follows that $\ \sigma
(u(t,x))\geq \frac{\sigma (0)}{2},$ \ \ $\nvdash t\geq 0,$ \ $\nvdash x\in
\lbrack 0,\varepsilon ]\cup \lbrack \pi -\varepsilon ,\pi ]$ \ for some $%
\varepsilon \in (0,\pi )$. So a global attractor still exists if one
replaces the positivity condition on $\sigma (\cdot )$ by the $\sigma (0)>0$.
\end{remark}


\begin{thebibliography}{9}
\bibitem{1} I. Chueshov, I. Lasiecka, Long-time behavior of second order
evolution equations with nonlinear damping, \textit{Memoirs of AMS}, 195
(2008).

\bibitem{2} S. Gatti, V. Pata, A one-dimensional wave equation with
nonlinear damping,\textit{\ Glasg. Math. J.}, 48 (2006), 419--430.

\bibitem{3} A. Kh. Khanmamedov, Global attractors for 2-D wave equations
with displacement dependent damping, \textit{Math. Methods Appl. Sci.}, 33
(2010) 177-187.

\bibitem{4} A. Kh. Khanmamedov, A strong global attractor for 3-D wave
equations with displacement dependent damping, \textit{Appl. Math. Letters},
23 (2010) 928-934.

\bibitem{5} A. Kh. Khanmamedov, Global attractors for the plate equation
with a localized damping and a critical exponent in an unbounded domain,
\textit{J. Diff. Eqs., }225 (2006) 528-548.

\bibitem{6} V. Pata, S. Zelik, Global and exponential attractors for 3-D
wave equations with displacement dependent damping, \textit{Math. Methods
Appl. Sci.}, 29 (2006), 1291--1306.

\bibitem{7} V. Pata, S. Zelik, Attractors and their regularity for 2-D wave
equations with nonlinear damping, \textit{Adv. Math. Sci. Appl.}, 17 (2007),
225--237.

\bibitem{8} A. Ruiz, Unique continuation for weak solutions of the wave
equation plus a potential, \textit{J. Math. Pures Appl.}, 71 (5) (1992)
455--467.
\end{thebibliography}
\end{document}